\newcommand{\vs}{\vspace}
\newcommand{\bc}{\begin{center}}
\newcommand{\ec}{\end{center}}
\theoremstyle{plain}
\newtheorem{theorem}{Theorem}[section]
\newtheorem{lemma}[theorem]{Lemma}
\newtheorem{proposition}[theorem]{Proposition}
\newtheorem{corollary}[theorem]{Corollary}
\theoremstyle{definition}
\newtheorem{definition}[theorem]{Definition}
\newtheorem{remark}[theorem]{Remark}
\newtheorem{example}[theorem]{Example}
 \numberwithin{equation}{section}
\begin{document}
\setcounter{page}{1}

\title[Perimetric Contraction on Polygons and Related Fixed Point Theorems]{\large Perimetric Contraction on Polygons and Related Fixed Point Theorems}
\author[]{Mi Zhou$^{*}$, Evgeniy Petrov$^{**,\dag}$}
\date{}
\maketitle

\vs*{-0.5cm}

\bc
{\footnotesize
$^{*}$Center for Mathematical Reaserch, University of Sanya, Sanya, Hainan 572022, China.\\
E-mail: mizhou@sanyau.edu.cn\\
\medskip
$^{**}$Function Theory Department, Institute of Applied Mathematics and Mechanics of the NAS of Ukraine, Batiuka Str. 19, Slovyansk 84116, Ukraine.\\
E-mail: eugeniy.petrov@gmail.com\\
$^\dag$ Corresponding Author: Evgeniy Petrov}
\ec

\bigskip

{\footnotesize
\noindent
{\bf Abstract.}
In the present paper, a new type of mappings called perimetric contractions on $k$-polygons is introduced. These contractions can be viewed as a generalization of mappings that contracts perimeters of triangles.
%considered in~\cite{Petrov1}.
%[Fixed point theorem for mappings contracting perimeters of triangles, J. Fixed Point Theory Appl. (2023) 25:74].
A fixed point theorem for this type of mappings in a complete metric space is established. Achieving a fixed point necessitates the avoidance of periodic points of prime period $2,3,\cdots, k-1$. The class of contraction mappings is encompassed by perimeter-based mappings, leading to the recovery of Banach's fixed point theorem as a direct outcome from our main result. A sufficient condition to guarantee the uniqueness of the fixed point is also provided. Moreover, we introduce the Kannan type perimetric contractions on $k$-polygons, establishing a fixed point theorem and a sufficient uniqueness condition. The relationship between these contractions, generalized Kannan type mappings, and mappings contracting the perimeters on $k$-polygons is investigated. Several examples are illustrated to support the validity of our main results.

\noindent
{\bf Key Words and Phrases}: perimetric contractions on $k$-polygons; Kannan-type perimetric contractions on $k$-polygons; fixed point

\noindent {\bf 2020 Mathematics Subject Classification}: Primary 47H09, Secondary 47H10

}

\bigskip

\section{Introduction }
\quad Fixed point theory is essential in mathematics since many problems can be viewed as fixed point problems, which focus on determining the existence and uniqueness of solutions. Its applications are broad, covering matrix equations, differential equations, integral equations, optimization, image process and machine learning. The field's seminal work traces back to Stefan Banach's (1922) introduction of the Banach contraction principle \cite{Banach}, ensuring unique fixed points for contraction mappings in complete metric spaces. Subsequently, other notable scholars made substantial contributions to the advancement of fixed point theory. The enduring interest of mathematicians in fixed-point theorems remains strong after a century, as evidenced by the proliferation of articles and monographs in recent decades focusing on the theory and its applications, as discussed in various works, see e.g.~\cite{Kirk}-\cite{Subrahmanyam}. The Banach contraction principle has been generalized in many ways over the years. In fact, these generalizations of Banach contraction can mainly be categorized into four main groups, each distinguished by its unique type of extension. The first category involves relaxing the mapping's contractive condition by adding compatible distances between points or control functions. The second category broadens by changing the topology or expanding metric space definitions. The third category extends the theorem's applicability to multi-valued mappings, exploring fixed points in a generalized setting. Lastly, the fourth category studies common fixed points and best proximity results for mappings in generalized metric or topological spaces. Please refer to \cite{Reich}-\cite{Debnath} for more information on extensions of contractive mappings in various settings.

Let $X$ be a metric space and $T$ be a self mapping defined on $X$. Usually, Banach contraction or other conditions typically involve distances between two points, focusing on images produced by the operator $T$ and its original preimages, like $d(x, y), d(Tx, Ty), d(x, Tx), d(y, Ty), d(x, Ty), d(y, Tx)$, and more. Pittnauer \cite{Pittnauer} and Achari \cite{Achari} explored fixed point theorems for contractive type mappings involving three points in the space. While some authors also examined three or four points, their focus was on up to six distances in a standard contractive definition using different combinations of four points taken two at a time.

In 2023, Petrov \cite{Petrov1} introduced a new class of mappings that contract perimeters of triangles and extended Banach contraction. The main theorem in this work is proven using concepts from Banach's classical theorem, with a key distinction being that the provided mappings are defined based on three points in space instead of two. In addition, he imposed a condition to avoid the occurrence of periodic points of prime period 2 in the mapping $T$. The ordinary contraction mappings form an important subclass of such mappings.
\begin{definition}\label{def:1.1}\cite{Petrov1} Let $(X,d)$ be a metric space with at least three points. Then
the mapping $T: X\rightarrow X$ is defined as contracting perimeters of triangles if there is an $\alpha\in[0,1)$ such that
\begin{eqnarray}\label{eq:1.1}
d(Tx, Ty)+d(Tx,Ty)+d(Tz,Tx)\leq\alpha (d(x,y)+d(y,z)+d(z,x))
\end{eqnarray}
for three pairwise distinct points $x,y,z\in X$.
\end{definition}

\begin{remark}\label{rm:1.1}The prerequisite for $x,y,z\in X$ to be pairwise distinct is crucial in this definition. Without this condition, the definition coincides with that of a contraction mapping. Research has demonstrated the continuity of mappings contracting perimeters of triangles. When every point in the metric space serves as an accumulation point, the distinctions between these mappings and contraction mappings vanish.
\end{remark}

In 2024, Petrov and Bisht \cite{Petrov2} introduced a three-point analogue of Kannan type mappings \cite{Kannan} by utilizing the concept of mapping contracting perimeters of triangles, leading to the development of fixed point results.
\begin{definition}\label{def:1.2}\cite{Petrov2} Let $(X,d)$ be a metric space with at least three points. Then $T: X\rightarrow X$ is a generalized Kannan type mapping on $Y$ if there is a $\lambda\in[0,\frac{2}{3})$ such that
\begin{eqnarray}\label{eq:1.2}
d(Tx, Ty)+d(Ty,Tz)+d(Tz, Tx)\leq \lambda(d(x,Tx)+d(y,Ty)+d(z,Tz))
\end{eqnarray}
for any three pairwise distinct points $x,y,z\in X$.
\end{definition}
In a complete metric space, every generalized Kannan type mapping attains fixed points if it does not achieve periodic points of prime period 2. There are at most two fixed points. Moreover, in \cite{Petrov2}, the authors examined the relationships between generalized Kannan type mappings, Kannan type mappings, and mappings contracting perimeters of triangles. They concluded that the classes of Kannan type mappings and generalized Kannan type mappings are distinct. Additionally, they found that generalized Kannan type mappings are discontinuous in general case but continuous at fixed points.

Very recently, based on \cite{Petrov1}, Anis Banerjee \textit{et al.} \cite{Anis} introduced a new four-point version of Banach-type, Kannan-type, and Chatterjea-type contractions, called a perimetric contraction on quadrilaterals. They also analysed their characteristics and proved conditions for fixed point existence in a complete metric space.

We are intrigued by the mentioned findings and aim to investigate more than four-point analogue of prior findings to establish conditions for the existence and uniqueness of fixed points. Our objective also includes comparing various classes of mappings to reveal potential relationships between them.

In the second section, we introduce a novel type of mappings contracting the perimeters of $k$-polygons $(k\geq3,k\in\mathbb{N})$. We also discuss some key properties of these mappings. Furthermore, we establish a fixed point theorem for this type of mapping in a complete metric space. It is essential to avoid periodic points of prime periods $2,3,\ldots,k-1$ in order to obtain a fixed point. As a result, Banach's fixed point theorem can be directly applied. Additionally, we derive a sufficient condition for the fixed point to be unique. To support our findings, we provide practical examples.

In the third section, we introduce Kannan type perimetric contractions on $k$-polygons and establish a fixed point result. We also derive a sufficient condition for the fixed point to be unique. Furthermore, we investigate the relationship between Kannan type perimetric contractions on $k$-polygons, generalized Kannan type mappings, and mappings that contract the perimeters of $k$-polygons. Our findings show that these classes are distinct, and we provide non-trivial examples to illustrate this.

Throughout the next whole discussions, we denote $(X,d)$ as metric space, $|X|$ as the cardinality of the set $X$, $\mathbb{N}$ as the set of natural numbers. The concept of a periodic point is defined as follows. Let $T$ be a mapping on the metric space $X$. A point $x\in X$ is said to be a periodic point of period $p$ if $T^px=x$. The prime period of $x$ is the least positive integer $p$ for which $T^px=x$.

\section{Perimetric Contraction on $k$-Polygons and Related Fixed Point Theorem}
The results proved in~\cite{Petrov1} were generalized for mappings contracting total pairwise distances in~\cite{Petrov3}. Let $(X,d)$ be a metric space, $|X|\geqslant 2$, and let $x_1$, $x_2$, \ldots, $x_k \in X$, $k\geqslant 2$. Denote by
\begin{equation}\label{e0}
S(x_1,x_2,\ldots,x_k)=\sum\limits_{1\leqslant i<j\leqslant k}d(x_i,x_j)
\end{equation}
the sum of all pairwise distances between the points from the set $\{x_1, x_2, \ldots, x_k\}$, which we call \emph{the total pairwise distance}. For $k\geqslant 3$ denote also by
\begin{equation}\label{e00}
P(x_1,x_2,\ldots,x_k)=d(x_1,x_2)+d(x_2, x_3)+\cdots+d(x_{k-1},x_k)+d(x_k,x_1)
\end{equation}
the perimeter of a polygon on the \textbf{consecutive points} $x_1, x_2, \ldots, x_k$.

\begin{definition}[\cite{Petrov3}]\label{d1}
Let $k\geqslant 3$, $k\in \mathbb N$, and let $(X,d)$ be a metric space with $|X|\geqslant k$. We shall say that $T\colon X\to X$ is a \emph{mapping contracting total pairwise distance on $k$ points} if there exists $\lambda\in [0,1)$ such that the inequality
  \begin{equation}\label{e1}
S(Tx_1,Tx_2,\ldots,Tx_k) \leqslant \lambda S(x_1,x_2,\ldots,x_k)
  \end{equation}
holds for all $k$ pairwise distinct points $x_1, x_2, \ldots, x_k \in X$.
\end{definition}

The mappings called the perimetric contractions on $k$-polygons, introduced in the following definition,  is the main object of investigation of this section.

\begin{definition}\label{def:2.1}
Let $k\geqslant3$, $k\in \mathbb N$, and let $(X,d)$ be a metric space with $|X|\geqslant k$. We shall say that $T\colon X\to X$ is a \emph{perimetric contraction on $k$-polygons} in $X$ if there exists $\lambda \in [0,1)$ such that the inequality
  \begin{equation}\label{eq:2.1}
P(Tx_1,Tx_2,\ldots,Tx_k) \leqslant \lambda P(x_1,x_2,\ldots,x_k)
  \end{equation}
  holds for all $k$ pairwise distinct points $x_1, x_2, \ldots, x_k \in X$.
\end{definition}

\begin{remark}\label{rm:2.1} If we choose $k=3$ in Definition \ref{def:2.1}, then the mapping $T$ reduces to be the perimetric contraction on triangles considered in~\cite{Petrov1}, which in turn coincides with the mappings contracting total pairwise distance, see Definition~\ref{d1}.
\end{remark}

\begin{theorem}\label{th:2.1}
Let $(X,d)$ be a metric space with $|X|\geqslant3$ and let $3\leqslant k \leqslant |X|$, $k\in \mathbb N$. Then any perimetric contraction on $k$-polygons is a mapping contracting total pairwise distances on $k$ points with the same coefficient of contraction.
\end{theorem}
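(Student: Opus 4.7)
The plan is to exploit the symmetry of $S$ under permutations of its arguments. The perimeter $P$ depends on the cyclic order of its arguments whereas $S$ does not, so summing the perimetric inequality \eqref{eq:2.1} over every ordering of the $k$ points should leave $S$ on both sides, up to a combinatorial factor that is the same for both sides.

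Fix pairwise distinct points $x_1,\ldots,x_k\in X$. For every permutation $\sigma$ of $\{1,\ldots,k\}$ the tuple $(x_{\sigma(1)},\ldots,x_{\sigma(k)})$ still consists of $k$ pairwise distinct points, so Definition~\ref{def:2.1} applies and yields
\[
P(Tx_{\sigma(1)},\ldots,Tx_{\sigma(k)}) \le \lambda\, P(x_{\sigma(1)},\ldots,x_{\sigma(k)}).
\]
I would sum this inequality over all $k!$ permutations and then count, for a fixed unordered pair $\{a,b\}\subset\{1,\ldots,k\}$, how often $d(Tx_a,Tx_b)$ appears on the left. The distance $d(Tx_a,Tx_b)$ contributes to $P(Tx_{\sigma(1)},\ldots,Tx_{\sigma(k)})$ exactly when $a$ and $b$ are cyclically consecutive in the sequence $\sigma(1),\ldots,\sigma(k)$; enumerating such permutations by the cyclic edge at which the pair sits ($k$ choices), its internal order ($2$ choices), and the arrangement of the remaining $k-2$ points at the remaining positions ($(k-2)!$ choices) gives $2k(k-2)!$ permutations, independently of the pair $\{a,b\}$. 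Hence
\[
\sum_\sigma P(Tx_{\sigma(1)},\ldots,Tx_{\sigma(k)}) = 2k(k-2)!\cdot S(Tx_1,\ldots,Tx_k),
\]
and the identical identity holds with $T$ removed on the right-hand side.

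Dividing the summed inequality by the positive constant $2k(k-2)!$ yields $S(Tx_1,\ldots,Tx_k)\le\lambda\, S(x_1,\ldots,x_k)$ for all pairwise distinct $x_1,\ldots,x_k$, so $T$ contracts total pairwise distances on $k$ points with the same coefficient $\lambda$. The main (and essentially only) nontrivial step is the combinatorial bookkeeping — verifying that every pair is cyclically adjacent in exactly $2k(k-2)!$ of the $k!$ permutations, a count that does not depend on the pair — after which the conclusion is immediate. Note that the case $k=3$ is automatic since in that case $P=S$, as already observed in Remark~\ref{rm:2.1}.
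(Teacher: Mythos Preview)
Your proof is correct and follows essentially the same symmetrization strategy as the paper: average the perimetric inequality over all orderings of the $k$ points so that every unordered pair occurs with the same multiplicity on each side, then divide out that common factor to obtain~\eqref{e1}. The only cosmetic difference is that the paper sums over the $(k-1)!/2$ distinct Hamiltonian cycles (so each edge appears $(k-2)!$ times) rather than over all $k!$ permutations as you do (each edge appearing $2k(k-2)!$ times), your count being exactly $2k$ times the paper's because each Hamiltonian cycle corresponds to $2k$ permutations.
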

\begin{proof}
Let $x_1, x_2, \ldots, x_k$ be pairwise distinct points in $X$. The number of different polygons that can be formed on these  $k$ points is given by the formula for Hamiltonian cycles in a complete graph. Specifically, the number of distinct Hamiltonian cycles in an undirected complete graph with $k$ vertices is calculated as $H(k)=(k-1)!/2$. (There are $k!$ ways to arrange $k$ vertices in a cycle, but since the cycle can be traversed in two directions and can start at any of the $k$ vertices, we divide $k!$ by $2k$ to take into account these repetitions.)

For every of $H(k)$ $k$-polygons on the points $x_1, x_2, \ldots, x_k$ consider $H(k)$ inequalities \eqref{eq:2.1}. Summarizing the left and the right parts of these inequalities we get
\begin{equation}\label{e111}
\sum\limits_{i=1}^{H(k)} P(T\pi_i(x_1),T\pi_i(x_2),\ldots,T\pi_i(x_k)) \leqslant \alpha \sum\limits_{i=1}^{H(k)} P(\pi_i(x_1),\pi_i(x_2),\ldots,\pi_i(x_k)),
  \end{equation}
where by $\pi_i$ we denote the admissible permutations of the set $\{x_1, x_2, \ldots, x_k\}$.

Clearly, every edge $\{x_i,x_j\}$, $i\neq j$ in all these $H(k)$ $k$-polygons appears a fixed number of times depending only on $k$. To determine this number consider that each Hamiltonian cycle can be formed by fixing one edge and permuting the remaining vertices. For any given edge, there are $(k-2)!$ ways to arrange the remaining $k-2$ vertices in the cycle. Thus, the total number of Hamiltonian cycles that include a specific edge is $E(k)=(k-2)!$.

Dividing both parts of inequality \eqref{e111} by $E(k)$ we obtain exactly inequality \eqref{e1}, which completes the proof.
\end{proof}

The following example shows that perimetric contractions on $k$-polygons is a proper subclass of mappings contracting total pairwise distances on $k$ points.

\begin{example}\label{em:2.1}
Let $(X,d)$ be a metric space such that $X=\{x_1,x_2,x_3,x_4\}$, $d(x_1,x_2)=d(x_2,x_3)=d(x_3,x_4)=d(x_4,x_1)=d(x_1,x_3)=2$ and $d(x_2,x_4)=1$. Let also $T\colon X\to X$ be such that $T(x_1)=x_1$, $Tx_2=x_3$, $Tx_3=x_4$ and $Tx_4=x_1$.  One can see that $S(x_1,x_2,x_3,x_4)=11$, $S(Tx_1,Tx_2,Tx_3,Tx_4)=10$, $P(x_1,x_2,x_4,x_3)=7$ and $P(Tx_1,Tx_2,Tx_4,Tx_3)=8$. Thus, $T$ is a mapping contracting the total pairwise distance on $4$ points but not a perimetric contraction on $4$-polygons.
\end{example}

The following proposition was proved in~\cite{Petrov3}.
\begin{proposition}\label{p2.1}
Mapping contracting total pairwise distance on $m$ points, $m\geqslant 2$, is a mapping contracting total pairwise distance on $n$ points for all $n>m$.
\end{proposition}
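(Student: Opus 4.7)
The plan is a direct combinatorial averaging argument: take the hypothesis, which controls the total pairwise distance on every $m$-subset of distinct points, sum the resulting inequalities over all $m$-subsets of a given $n$-tuple, and observe that each edge is counted the same number of times on both sides.

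More precisely, fix $n>m$ and let $x_1,\ldots,x_n\in X$ be pairwise distinct. For every $m$-element subset $\{x_{i_1},\ldots,x_{i_m}\}\subset\{x_1,\ldots,x_n\}$ (there are $\binom{n}{m}$ of them), the points are still pairwise distinct, so the hypothesis gives
\[
S(Tx_{i_1},\ldots,Tx_{i_m})\leqslant \lambda\, S(x_{i_1},\ldots,x_{i_m}).
\]
Summing these $\binom{n}{m}$ inequalities yields
\[
\sum_{1\leqslant i_1<\cdots<i_m\leqslant n} S(Tx_{i_1},\ldots,Tx_{i_m}) \leqslant \lambda \sum_{1\leqslant i_1<\cdots<i_m\leqslant n} S(x_{i_1},\ldots,x_{i_m}).
\]

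The key observation is a double-counting identity. For any pair $1\leqslant p<q\leqslant n$, the distance $d(x_p,x_q)$ appears inside $S(x_{i_1},\ldots,x_{i_m})$ exactly when $\{p,q\}\subset\{i_1,\ldots,i_m\}$, which happens for $\binom{n-2}{m-2}$ subsets. Hence the right-hand sum equals $\binom{n-2}{m-2}\,S(x_1,\ldots,x_n)$. The same counting applies verbatim to the left-hand side, since $S(Tx_{i_1},\ldots,Tx_{i_m})$ is by definition the sum of $d(Tx_{i_a},Tx_{i_b})$ over pairs $a<b$ whether or not the images happen to coincide. Dividing both sides by the nonzero constant $\binom{n-2}{m-2}$ produces
\[
S(Tx_1,\ldots,Tx_n)\leqslant \lambda\, S(x_1,\ldots,x_n),
\]
which is the claimed inequality on $n$ points with the same contraction coefficient.

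I do not expect a serious obstacle here; the only point that merits care is that we never need the images $Tx_1,\ldots,Tx_n$ to be pairwise distinct, because the hypothesis is invoked only on the domain side where distinctness is inherited from the distinctness of the $x_i$'s, and the functional $S$ is well defined on any tuple of (not necessarily distinct) points. This is why the argument passes cleanly from $m$ to an arbitrary $n>m$ in one step, without needing an induction on $n-m$.
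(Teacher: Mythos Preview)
Your argument is correct. The paper itself does not prove this proposition---it is quoted from~\cite{Petrov3}---so there is no in-paper proof to compare against directly; that said, your double-counting over $m$-subsets is exactly the natural approach and mirrors the style of the proof of Theorem~\ref{th:2.1} here (sum the hypothesis over all admissible configurations, observe each edge is counted a fixed number of times, divide).
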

Using Theorem~\ref{th:2.1} and Proposition~\ref{p2.1}, we get the following.

\begin{corollary}
Let $(X,d)$ be a metric space with $|X|\geqslant3$ and let $3\leqslant k \leqslant |X|$, $k\in \mathbb N$. Then any perimetric contraction on $k$-polygons is a mapping contracting total pairwise distances on $n$ points with the same coefficient of contraction for all $n>k$.
\end{corollary}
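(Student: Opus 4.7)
The plan is to chain together the two immediately preceding results: Theorem~\ref{th:2.1} and Proposition~\ref{p2.1}. First I would invoke Theorem~\ref{th:2.1} to convert the hypothesis, obtaining that every perimetric contraction on $k$-polygons with coefficient $\lambda\in[0,1)$ is a mapping contracting total pairwise distances on $k$ points with that very same coefficient $\lambda$. Then I would apply Proposition~\ref{p2.1} with $m=k$ to promote this to mappings contracting total pairwise distances on $n$ points for every $n>k$.

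The only subtle point, and the step I would flag as the main obstacle, is the precise claim that the coefficient $\lambda$ is preserved through Proposition~\ref{p2.1}: the statement of the proposition as quoted merely asserts the existence of some contraction constant for each $n$, and one must check that the standard derivation does not worsen it. To that end I would briefly recall the combinatorial argument: given $n$ pairwise distinct points $x_1,\ldots,x_n\in X$, one writes down inequality~\eqref{e1} with constant $\lambda$ for each of the $\binom{n}{k}$ $k$-element subsets of $\{x_1,\ldots,x_n\}$ and sums them. Each unordered pair $\{x_i,x_j\}$ with $i\neq j$ appears in exactly $\binom{n-2}{k-2}$ of these $k$-subsets on the left-hand side and on the right-hand side, so summation yields
\begin{equation*}
\binom{n-2}{k-2}\, S(Tx_1,\ldots,Tx_n) \;\leqslant\; \lambda \binom{n-2}{k-2}\, S(x_1,\ldots,x_n).
\end{equation*}
Dividing by $\binom{n-2}{k-2}$ recovers \eqref{e1} for the full $n$-tuple with the same $\lambda$. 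No distinctness assumption on the images $Tx_i$ is needed at any stage, since the left-hand side of~\eqref{e1} is simply a sum of distances.

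Hence the two-step chain produces the conclusion with the coefficient intact, and nothing further is required. I would keep the write-up to a few lines, citing Theorem~\ref{th:2.1} and Proposition~\ref{p2.1} explicitly, and inserting a short remark (or a parenthetical) to justify that the constant $\lambda$ transfers unchanged.
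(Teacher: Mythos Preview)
Your proposal is correct and follows exactly the paper's approach: the corollary is stated there without proof, merely citing Theorem~\ref{th:2.1} and Proposition~\ref{p2.1} as its justification. Your additional care in verifying that the coefficient $\lambda$ is preserved through Proposition~\ref{p2.1} (via the $\binom{n-2}{k-2}$ double-counting argument) is a genuine improvement, since the proposition as quoted is silent on that point while the corollary explicitly claims it.
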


Theorem 2.4 from \cite{Petrov1} verify the assertion ``$T$ has a fixed point if and only if $T$ does not posses periodic points of prime period $2$''. Now, we will provide an example to show that the converse analogous implication is not applicable for perimetric contraction on $k$-polygons, in the case $k=7$.

\begin{example}\label{ex:2.1} Let $(X,d)$ be a metric space such that $X=\{x_1,x_2,\ldots,x_7\}$ and the metric $d$ defined as
\begin{eqnarray*}
d(x_i,x_j)=1, \, 1\leq i,j\leqslant 6, \, i\neq j \quad \text{and}\quad d(x_i,x_7)=2, \, 1\leq i\leqslant 6.
\end{eqnarray*}
Define $T$ as follows: $Tx_1=x_1$, $Tx_2=x_3$, $Tx_3=x_2$, $Tx_4=x_5$, $Tx_5=x_6$, $Tx_6=x_4$, and $Tx_7=x_1$. Then $P(x_1,x_2,\ldots,x_7)=1+1+1+1+1+2+2=9$, and $P(Tx_1,Tx_2,\ldots,Tx_7)=1+1+1+1+1+1+0=6$.
Note also that always $P(\pi(x_1),\pi(x_2),\ldots,\pi(x_7))=1+1+1+1+1+2+2=9$, where $\pi$ is any permutation on the set $X$ and $P(T\pi(x_1),T\pi(x_2),\ldots,T\pi(x_7))<9$ since this sum does not possess distances which are equal to $2$. Thus, $T$ is a perimetric contraction on $7$-polygons, also $T$ is not a perimetric contraction on 3-polygons as $P(Tx_4,Tx_5,Tx_6)=P(x_4,x_5,x_6)=3$. Moreover, $T$ has one fixed point $x_1$, two points $x_2$ and $x_3$ of prime period 2, three points $x_5, x_6$ and $x_7$ of prime period 3.
\end{example}

By virtue of the Theorem~\ref{th:2.1} Proposition~\ref{pro:2.1}, Theorem~\ref{th:2.2}, Proposition~\ref{pro:2.2}, Lemma~\ref{lem:2.3}, Proposition~\ref{cor:2.2}, are direct corollaries of the corresponding results proved in~\cite{Petrov3}. Nevertheless, below we give independent proofs of these results.

Recall that an accumulation point $x$ in a metric space $X$ is a point such that every open ball centered at $x$ contains infinitely many points of $X$.
%We now examine the continuity of these mappings.
\begin{proposition}\label{pro:2.1} Every perimetric contraction on $k$-polygons is continuous.
\end{proposition}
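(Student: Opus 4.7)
The plan is to establish continuity of $T$ at an arbitrary point $x_0\in X$ by distinguishing two cases according to whether $x_0$ is isolated or an accumulation point of $X$. If $x_0$ is isolated, then every sequence $y_n\to x_0$ is eventually constant equal to $x_0$, whence $Ty_n\to Tx_0$ trivially. So the substantive case is that $x_0$ is an accumulation point, and $y_n\to x_0$ with $y_n\neq x_0$ for every $n$.

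Given $\varepsilon>0$, the accumulation hypothesis produces infinitely many points of $X$ in the ball $B(x_0,\varepsilon)$, so for each $n$ I can select $k-2$ pairwise distinct auxiliary points $z_1^{(n)},\ldots,z_{k-2}^{(n)}\in B(x_0,\varepsilon)\setminus\{x_0,y_n\}$. Then $y_n,x_0,z_1^{(n)},\ldots,z_{k-2}^{(n)}$ are $k$ pairwise distinct points, so Definition~\ref{def:2.1} applied to the polygon taken in that cyclic order yields
\[
d(Ty_n,Tx_0)\;\leqslant\;P(Ty_n,Tx_0,Tz_1^{(n)},\ldots,Tz_{k-2}^{(n)})\;\leqslant\;\lambda\,P(y_n,x_0,z_1^{(n)},\ldots,z_{k-2}^{(n)}),
\]
where the first inequality holds because $d(Ty_n,Tx_0)$ is one of the $k$ non-negative summands composing the perimeter.

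Routine triangle inequality estimates on each edge of the polygon $P(y_n,x_0,z_1^{(n)},\ldots,z_{k-2}^{(n)})$ — noting that $d(x_0,z_1^{(n)})<\varepsilon$, $d(z_j^{(n)},z_{j+1}^{(n)})<2\varepsilon$, and $d(z_{k-2}^{(n)},y_n)\leqslant\varepsilon+d(y_n,x_0)$ — bound the perimeter by an expression of the form $C_1 d(y_n,x_0)+C_2\varepsilon$ for constants $C_1,C_2$ depending only on $k$. Letting $n\to\infty$ gives $\limsup_n d(Ty_n,Tx_0)\leqslant \lambda C_2\varepsilon$, and since $\varepsilon>0$ is arbitrary, $Ty_n\to Tx_0$.

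The main obstacle is the pairwise distinctness requirement, which must be preserved while simultaneously letting both $y_n$ and the auxiliary points approach $x_0$; this is exactly what the accumulation point hypothesis buys, since it lets one locate $k-2$ points in an arbitrarily small punctured neighbourhood of $x_0$ that also avoid $y_n$. As a variant, one could invoke Theorem~\ref{th:2.1} and run the same argument with the total pairwise distance $S$ in place of $P$, which makes the bound $d(Ty_n,Tx_0)\leqslant S(Ty_n,Tx_0,Tz_1^{(n)},\ldots,Tz_{k-2}^{(n)})$ even more transparent; the $\varepsilon$-argument is identical either way.
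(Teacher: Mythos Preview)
Your proof is correct and follows essentially the same route as the paper: split into the isolated versus accumulation case, and in the latter surround $x_0$ by $k-2$ auxiliary points in a small ball so that together with $x_0$ and the approaching point one obtains $k$ pairwise distinct points to which the perimetric contraction inequality applies, then bound the perimeter by a multiple of the ball's radius. The paper packages this as a direct $\varepsilon$--$\delta$ argument (choosing the auxiliary points inside $B(x^\ast,\delta)$ with $\delta<\varepsilon/(2(k-1))$) rather than your sequential two-step limit, and you are slightly more explicit than the paper about guaranteeing pairwise distinctness of the auxiliary points from each other and from $y_n$; otherwise the arguments coincide.
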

\begin{proof}
Let $(X,d)$ be a metric space with $|X|\geqslant3$, and a mapping $T:X\rightarrow X$ be a mapping contracting perimeters of $k$-polygons $(3\leqslant k\leq|X|, k\in\mathbb{N})$ on $X$. Chosen any $x^\ast\in X$, we consider the following two potential cases. If $x^\ast$ is an isolated point in $X$, then $T$ is continuous at $x^\ast$. If $x^\ast$ is not an isolated point but an accumulation point. Hence, the rest proof is to prove that for any $\epsilon>0$, there exits $\delta>0$ such that $d(Tx,Tx^\ast)<\epsilon$ whenever $d(x,x^\ast)<\delta$.
For any $\epsilon>0$, choose $\delta>0$ being such that $0<\delta<\frac{\epsilon}{2(k-1)}$.
Since $x^\ast$ is an accumulation point, there exist $p_i\in X, i=1,2,\ldots,k-2$ such that $d(p_i,x^\ast)<\delta, i=1,2,\ldots,k-2$. Now, for any $x\in X$ with $x\neq x^\ast$ satisfying $d(x,x^\ast)<\delta$, we have
\begin{align*}
d(Tx,Tx^\ast)&\leq P(Tx,Tx^\ast,Tp_1,Tp_2,\ldots,Tp_{k-3},Tp_{k-2})\\
&\leq\lambda P(x,x^\ast,p_1,p_2,\ldots,p_{k-3},p_{k-2})\\
&\leq2\lambda(d(x,x^\ast)+d(x^\ast,p_1)+d(p_2,x^\ast)+\cdots+d(p_{k-2},x^\ast))\\
&<2(k-1)\delta\\
&<\epsilon.
\end{align*}
Therefore, the conclusion follows.
\end{proof}

We shall now establish the requisite condition for the existence of fixed points in perimetric contractions on $k$-polygons.

\begin{theorem}\label{th:2.2} Suppose $(X, d)$ is a complete metric space with $|X|\geqslant 3$. Let $T: X\rightarrow X$ be a perimetric contraction on $k$-polygons $(3\leqslant k\leq|X|, k\in\mathbb{N})$ in $X$. $T$ has a fixed point in $X$ if it does not have periodic points of prime periods $i, i=2,3,\ldots,k-1$, and it can admit at most $k-1$ fixed points.
\end{theorem}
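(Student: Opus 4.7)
The plan is to adapt the iteration argument from the triangle case in~\cite{Petrov1}. Pick any $x_0 \in X$ and set $x_n = T^n x_0$. If some $x_n = x_{n+1}$, then $x_n$ is already a fixed point and the existence claim is proved. Otherwise, I argue that for every $n$ the points $x_n, x_{n+1}, \ldots, x_{n+k-1}$ are pairwise distinct: any equality $x_i = x_j$ with $i < j$ and $j-i \leq k-1$ would make $x_i$ a periodic point whose prime period $p$ divides $j-i$; the standing assumption $x_i \neq x_{i+1}$ excludes $p = 1$, so $p \in \{2,\ldots,k-1\}$, contradicting the hypothesis of the theorem.

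Having secured pairwise distinctness, I apply the defining inequality \eqref{eq:2.1} to $(x_n, x_{n+1}, \ldots, x_{n+k-1})$ to obtain
\begin{equation*}
a_{n+1} := P(x_{n+1}, x_{n+2}, \ldots, x_{n+k}) \leq \lambda\, P(x_n, x_{n+1}, \ldots, x_{n+k-1}) = \lambda\, a_n,
\end{equation*}
whence $a_n \leq \lambda^n a_0$. Since $d(x_n, x_{n+1})$ is one of the summands defining $a_n$, one gets $d(x_n, x_{n+1}) \leq \lambda^n a_0$, and a standard telescoping estimate $d(x_n, x_m) \leq \sum_{i=n}^{m-1} d(x_i, x_{i+1})$ combined with the geometric decay shows that $\{x_n\}$ is Cauchy. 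Completeness of $X$ yields $x_n \to x^{\ast}$, and continuity of $T$ furnished by Proposition~\ref{pro:2.1} gives $Tx^{\ast} = \lim_{n\to\infty} Tx_n = \lim_{n\to\infty} x_{n+1} = x^{\ast}$.

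For the bound on the number of fixed points, suppose to the contrary that $T$ admits $k$ pairwise distinct fixed points $y_1, \ldots, y_k$. Then $P(y_1, \ldots, y_k) > 0$, and applying \eqref{eq:2.1} gives $P(y_1, \ldots, y_k) = P(Ty_1, \ldots, Ty_k) \leq \lambda P(y_1, \ldots, y_k)$, contradicting $\lambda < 1$; thus the fixed point set has cardinality at most $k-1$. The step I expect to be the main obstacle is the pairwise-distinctness verification in the first paragraph, since this is the one place where the hypothesis ruling out prime periods $2,\ldots,k-1$ is used essentially and where the proof genuinely departs from the triangle case; once that is in hand, the remaining analytic estimates are routine polygon analogues of the Banach contraction argument, and the uniqueness bound follows immediately from applying \eqref{eq:2.1} to a hypothetical overcount of fixed points.
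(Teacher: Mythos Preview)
Your proof is correct and follows essentially the same route as the paper: iterate, use the absence of periodic points of prime periods $2,\ldots,k-1$ to make $k$ consecutive iterates pairwise distinct, apply~\eqref{eq:2.1} to get geometric decay of the perimeters $a_n$, deduce Cauchy, pass to the limit via continuity (Proposition~\ref{pro:2.1}), and rule out $k$ fixed points by a direct application of~\eqref{eq:2.1}. The only difference is that you spell out the prime-period divisibility argument explicitly, whereas the paper leaves it as ``a simple computation''.
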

\begin{proof}
Let $T: X\rightarrow X$ be a perimetric contraction on $k$-polygons in $X$ that does not have periodic points of prime period $i, i=2,3,\cdots,k-1$.
For any chosen $x_0\in X$, define the sequence $\{x_n\}$ by $x_n=Tx_{n-1}, n\in\mathbb{N}$. If $x_n$ is a fixed point of $T$ for any $n\in\mathbb{N}\cup\{0\}$, then the proof is completed. Assume that $x_n\neq Tx_n$ for all $n\in\mathbb{N}\cup\{0\}$, we have $x_n\neq x_{n+1}, n=0,1,2,\cdots$. Since $T$ does not attain periodic points of prime periods $2,3,\cdots,k-1$, therefore, it follows from a simple computation that every $k$ consecutive elements of $\{x_n\}$ are pairwise distinct.

Let $r_n=P(x_n,x_{n+1},x_{n+2},\ldots,x_{n+k-2},x_{n+k-1})$ for all $n\in\mathbb{N}\cup\{0\}$, then $r_n>0$, for all $n\in\mathbb{N}\cup\{0\}$.\\
From the perimetric contraction assumption \eqref{eq:2.1}, for all $n\in\mathbb{N}$ we have $r_n\leq \lambda r_{n-1}$. Also,
\begin{align*}
&d(x_0,x_1)\leq r_0,\\
&d(x_1,x_2)\leq r_1\leq \lambda r_0,\\
&\quad\quad\vdots\\
&d(x_n,x_{n+1})\leq r_n\leq\lambda r_{n-1}\leq \lambda^n r_0.
\end{align*}
Now, for any $n\in\mathbb{N}\cup\{0\}$ and any $m\in\mathbb{N}$, we have
\begin{align*}
d(x_n,x_{n+m})&\leq d(x_{n},x_{n+1})+d(x_{n+1},x_{n+2})+\cdots+d(x_{n+m-1},x_{n+m})\\
&\leq \lambda^n r_0+\lambda^{n+1}r_0+\cdots+\lambda^{n+m-1}r_0\\
&=\lambda^n(1+\lambda+\lambda^2+\cdots+\lambda^{m-1})r_0\\
&\leq \lambda^n\frac{1}{1-\lambda}r_0,
\end{align*}
which shows that $\{x_n\}$ is a Cauchy sequence. Due to the completeness of $X$, it follows that $x_n$ converges to a point $w$ in $X$.
%Now,
%\begin{align*}
%&d(w,Tw)\\
%&\leq d(w,x_{n+k})+d(x_{n+k},Tw)\\
%&\leq d(w,x_{n+k})+P(Tw,Tx_{n+k-1},Tx_{n+k-2},\ldots,Tx_{n+1},Tx_n)\\
%&\leq d(w,x_{n+k})+\lambda P(w,x_{n+k-1},x_{n+k-2},\ldots,x_{n+1},x_n).
%\end{align*}
%Taking the limit in the above inequality as $n\rightarrow\infty$, it follows that $w=Tw$, that is $w$ is a fixed point of $T$.\\

Let us prove that $Tw=w$. Since $x_n\to w$, and by Proposition~\ref{pro:2.1} the mapping $T$ is continuous, we have $x_{n+1}=T x_n\to Tw$.  By the triangle inequality we have
$$
d(w,Tw)\leqslant d(w,x_{n})+d(x_{n},Tw)\to 0\,\, \text{ as } n\to \infty,
$$
which means that $w$ is the fixed point and contradicts to our assumption.

Assume that $T$ have at least $k$ distinct fixed points, say $w_i, i=1,2,\ldots,k$, that is, $Tw_i=w_i, i=1,2,\ldots,k$. Then
\begin{eqnarray*}
P(Tw_1,Tw_2,\ldots,Tw_{k-1},Tw_k)\leq\lambda P(w_1,w_2,\ldots,w_{k-1},w_k),
\end{eqnarray*}
which implies that $\lambda\geq 1$, a contradiction to \eqref{eq:2.1}. Hence, the conclusion follows.
\end{proof}

We now offer an example supporting Theorem~\ref{th:2.2}. It  illustrates a mapping that contracts the perimeters of $k$-polygons, $k\in\mathbb{N}$, $k\geqslant 3$, while possessing $k-1$ fixed points.

\begin{example}\label{em:2.2} Consider the metric space $(X,d)$ where $X=\{p_1,p_2,\ldots,p_k\}, k\geqslant3$ and $d$ is such that
\begin{eqnarray*}
d(x_i,x_j)=1, \, 1\leq i,j\leqslant k-1, \, i\neq j, \quad \text{and}\quad d(x_i,x_k)=2, \, 1\leq i\leqslant k-1.
\end{eqnarray*}
The mapping $T:X\rightarrow X$ defined as $Tp_1=p_1, Tp_2=p_2, \cdots,Tp_k=p_1$, forms a perimetric contraction on the $k$-polygons in $X$, see Example~\ref{ex:2.1} for the corresponding analogous calculations. Additionally, $T$ does not have periodic points of prime periods $2,3,\ldots,k-1$ ensuring the existence of a fixed point for $T$, as indicated by Theorem \ref{th:2.1}. It is evident that the fixed points of $T$ are $\{p_1,p_3,\ldots, p_{k-1}\}$.
\end{example}

Observe from Example~\ref{em:2.2} that perimetric contractions on $k$-polygons may have multiple fixed points. To ensure a unique fixed point for this mapping, an infinite complete metric space is considered, leading to the subsequent result.

\begin{proposition}\label{pro:2.2} Suppose that under the assumption of Theorem \ref{th:2.2}, the mapping $T$ has a fixed point $x^\ast$ that acts as the limit for a specific iteration sequence $\{x_i\}_0^\infty$ defined by $x_i=Tx_{i-1}, i\in\mathbb{N}$ with $x^\ast\neq x_i$ for all $i\in\mathbb{N}\cup\{0\}$, then $x^\ast$ is the unique fixed point of $T$.
\end{proposition}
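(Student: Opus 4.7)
The plan is to argue by contradiction: suppose $T$ admits a second fixed point $y\neq x^\ast$, and derive an inequality of the form $d(x^\ast,y)\leqslant \lambda\, d(x^\ast,y)$, which is impossible since $\lambda\in[0,1)$ and $d(x^\ast,y)>0$. The device is to apply the perimetric contraction \eqref{eq:2.1} to the $k$-tuple consisting of $k-1$ consecutive orbit points $x_n,x_{n+1},\ldots,x_{n+k-2}$ together with $y$, then pass to the limit along $n$.

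First I would observe that $y\notin\{x_i:i\in\mathbb{N}\cup\{0\}\}$. Indeed, if $x_i=y$ for some $i$, then $x_{i+1}=Tx_i=Ty=y$ and by induction $x_j=y$ for all $j\geqslant i$, forcing $x_j\to y$, which contradicts the hypothesis $x_j\to x^\ast\neq y$. Combining this with the fact, established inside the proof of Theorem~\ref{th:2.2}, that the absence of periodic points of prime periods $2,3,\ldots,k-1$ guarantees that any $k$ consecutive terms of $\{x_n\}$ are pairwise distinct, I conclude that for every $n\in\mathbb{N}\cup\{0\}$ the points $x_n,x_{n+1},\ldots,x_{n+k-2},y$ are $k$ pairwise distinct elements of $X$.

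Next I would apply inequality \eqref{eq:2.1} to this $k$-tuple in the given cyclic order, and use $Tx_j=x_{j+1}$ and $Ty=y$ to rewrite the left-hand side, obtaining
\begin{equation*}
P(x_{n+1},x_{n+2},\ldots,x_{n+k-1},y)\leqslant \lambda\, P(x_n,x_{n+1},\ldots,x_{n+k-2},y).
\end{equation*}
By the definition \eqref{e00} of the perimeter, each side decomposes into $k-2$ edges between consecutive orbit points plus the two edges incident to $y$. Since $x_n\to x^\ast$, every edge of the form $d(x_{n+i},x_{n+i+1})$ is bounded by $d(x_{n+i},x^\ast)+d(x^\ast,x_{n+i+1})\to 0$, while each edge incident to $y$ converges to $d(x^\ast,y)$ by the continuity of the metric. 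Passing to the limit $n\to\infty$ yields $2d(x^\ast,y)\leqslant 2\lambda\, d(x^\ast,y)$, hence $\lambda\geqslant 1$, the desired contradiction.

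There is no deep obstacle; the only delicate point is verifying pairwise distinctness of the chosen $k$-tuple, which splits cleanly into the two observations above (orbit cannot be eventually $y$, and consecutive orbit points stay distinct by the no-short-period hypothesis). Once this is in place the limiting argument is automatic.
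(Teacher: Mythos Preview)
Your argument is correct and follows essentially the same strategy as the paper's proof: assume a second fixed point, apply the perimetric inequality \eqref{eq:2.1} to a $k$-tuple built from consecutive orbit points together with the fixed point(s), and pass to the limit along the orbit to force $\lambda\geqslant 1$. The only cosmetic difference is the choice of $k$-tuple---you use $k-1$ orbit points together with the second fixed point $y$, whereas the paper uses $k-2$ orbit points together with \emph{both} fixed points $x^\ast$ and $w^\ast$ and phrases the contradiction as the ratio $K_i=\dfrac{P(x^\ast,w^\ast,x_{i+1},\ldots,x_{i+k-2})}{P(x^\ast,w^\ast,x_i,\ldots,x_{i+k-3})}\to 1$; the limiting effect is identical.
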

\begin{proof} Suppose that $w^\ast$ is another fixed point of $T$ with $x^\ast\neq w^\ast$. It is clear that $x_i\neq w^\ast$ for $i\in\mathbb{N}\cup\{0\}$, otherwise, we have $w^\ast=x^\ast$. Hence, $w^\ast, x^\ast$ and $x_i, i\in\mathbb{N}\cup\{0\}$ are pairwise distinct points.
Consider the ratio
\begin{align*}
K_i&=\frac
{P(Tx^\ast,Tw^\ast,Tx_i,Tx_{i+1},Tx_{i+2},\ldots,Tx_{i+k-3})}
{P(x^\ast,w^\ast,x_i,x_{i+1},x_{i+2},\ldots,x_{i+k-3})}\\
&=\frac{P(x^\ast,w^\ast,x_{i+1},x_{i+2},x_{i+3},\ldots,x_{i+k-2})}
{P(x^\ast,w^\ast,x_i,x_{i+1},x_{i+2},\ldots,x_{i+k-3})}.
\end{align*}
Then by~\eqref{eq:2.1} we have  $K_i\leq \lambda$ for all $i\in\mathbb{N}\cup\{0\}$. Taking the limit in the above inequality as $i\rightarrow\infty$, we obtain that $K\rightarrow 1$, which contradicts to \eqref{eq:2.1}. Therefore, $T$ has a unique fixed point.
\end{proof}

Recall that a self mapping defined on a metric space $(X,d)$ is a contraction, if there exits $\lambda\in[0,1)$ such that
\begin{eqnarray}\label{eq:2.2}
d(Tx,Ty)\leq \lambda d(x,y), \, \, \text{ for all } \, x,y\in X.
\end{eqnarray}

Next, we will present an alternative proof of Banach Contraction Principle using Theorem \ref{th:2.2} in the following corollary.

\begin{corollary}\label{cor:2.1}(Banach Contraction Principle) A self mapping $T$ defined on a completed metric space $(X,d)$ being a contraction has a unique fixed point in $X$.
\end{corollary}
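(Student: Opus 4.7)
The plan is to realise any contraction as a perimetric contraction on $3$-polygons and then invoke Theorem~\ref{th:2.2}. First, I would dispose of the small cardinality cases: if $|X|=1$ the lone point is trivially the unique fixed point, and if $|X|=2$ the inequality~\eqref{eq:2.2} forces $Tx=Ty$ for the two distinct points of $X$ (since the only admissible value $<d(x,y)$ for $d(Tx,Ty)$ is $0$), making $T$ constant with its value as the unique fixed point.

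Assume henceforth $|X|\geqslant 3$ and take $k=3$. Summing~\eqref{eq:2.2} over the three sides of a triangle on pairwise distinct $x_1,x_2,x_3\in X$ yields
\[
P(Tx_1,Tx_2,Tx_3)=d(Tx_1,Tx_2)+d(Tx_2,Tx_3)+d(Tx_3,Tx_1)\leqslant \lambda\, P(x_1,x_2,x_3),
\]
so $T$ is a perimetric contraction on $3$-polygons in the sense of Definition~\ref{def:2.1}. It remains to rule out periodic points of prime period~$2$: if $T^2 x=x$ with $Tx\neq x$, a single application of~\eqref{eq:2.2} gives $d(x,Tx)=d(T^2 x,Tx)\leqslant \lambda\, d(Tx,x)$, which is impossible since $\lambda<1$. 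Thus every hypothesis of Theorem~\ref{th:2.2} (with $k=3$) is satisfied, and $T$ admits a fixed point in $X$.

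For uniqueness I would argue directly from~\eqref{eq:2.2} rather than appealing to Proposition~\ref{pro:2.2}, which would require an extra case distinction to ensure the iteration sequence stays off the fixed point: if $u,v$ were distinct fixed points then \eqref{eq:2.2} immediately gives $d(u,v)=d(Tu,Tv)\leqslant \lambda\, d(u,v)<d(u,v)$, a contradiction. The main obstacle in the whole argument is really just organisational — one must respect the cardinality hypothesis $|X|\geqslant k$ of Definition~\ref{def:2.1} and peel off the cases $|X|\in\{1,2\}$ at the outset; once this is done the structural content reduces to the one-line observation that~\eqref{eq:2.2} telescopes around any triangle and forbids periodicity.
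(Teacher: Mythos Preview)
Your proof is correct and follows essentially the same approach as the paper: handle small cardinalities separately, verify that a contraction is a perimetric contraction on $k$-polygons, exclude low-period periodic points via~\eqref{eq:2.2}, invoke Theorem~\ref{th:2.2}, and deduce uniqueness directly from the contraction inequality. The only difference is that you specialise to $k=3$ (so that only prime period~$2$ must be ruled out), whereas the paper phrases the argument for arbitrary $k\geqslant 3$; your choice is slightly cleaner and entirely sufficient.
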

\begin{proof} If $|X|=1,2$, the conclusion follows from the argument stated in the proof of Corollary 2.6 in \cite{Petrov1}. Let $|X|\geqslant 3$. xIt is obvious that $T$ has no periodic points of prime periods $2,3,\ldots,k-1$ for any $k\geqslant 3$, otherwise it contradicts to the Banach contraction condition. Indeed, for any periodic point $p(r)$ of prime period $r\in\{2,3,\ldots,k-1\}$, then
\begin{eqnarray*}
d(p(r),Tp(r))=d(T^rp(r),T^{r+1}p(r))\leq \lambda d(T^{r-1}p(r),T^rp(r))\leq \lambda^r d(p(r),Tp(r)),
\end{eqnarray*}
which is a contradiction.
Now, for any pairwise distinct points $x_n, n=1,2,\ldots,k$, we have
\begin{eqnarray*}
P(Tx_1,Tx_2,\ldots,Tx_{k-1},Tx_k)\leq \lambda P(x_1,x_2,\ldots,x_{k-1},x_k).
\end{eqnarray*}
This shows that $T$ is a perimetric contraction on $k$-polygons in $X$. By Theorem \ref{th:2.2}, $T$ has a maximum of $k-1$ fixed points in $X$. The Banach contraction condition implies the uniqueness of the fixed point.
\end{proof}

\begin{lemma}\label{lem:2.3} Let $(X, d)$ be a metric space with $|X|\geqslant 3$, and let $T:X\rightarrow X$ be
a perimetric contraction on $k$-polygons, $3\leqslant k\leq|X|$. If $x$ is an accumulation point of $X$, then Banach contraction inequality \eqref{eq:2.2} holds for all points $y\in X$.
\end{lemma}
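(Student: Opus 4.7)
The plan is to exploit the accumulation point property of $x$ to let $k-2$ auxiliary vertices of a $k$-polygon collapse onto $x$, so that the perimetric contraction inequality \eqref{eq:2.1} degenerates, in the limit, into the two-point Banach inequality \eqref{eq:2.2}. For $y=x$ the inequality is trivial, so I assume $y\neq x$. Since $x$ is an accumulation point, for each $n\in\mathbb{N}$ with $1/n<d(x,y)$ I can select $k-2$ points $p_1^{(n)},\ldots,p_{k-2}^{(n)}$, pairwise distinct, all contained in $B(x,1/n)\setminus\{x\}$; by the choice of $1/n$ they are also distinct from $y$, so the tuple $x,y,p_1^{(n)},\ldots,p_{k-2}^{(n)}$ consists of $k$ pairwise distinct points and Definition~\ref{def:2.1} applies.

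I would then apply the perimetric contraction to the $k$-polygon with these consecutive vertices, obtaining
\begin{equation*}
d(Tx, Ty) + d(Ty, Tp_1^{(n)}) + \sum_{i=1}^{k-3} d(Tp_i^{(n)}, Tp_{i+1}^{(n)}) + d(Tp_{k-2}^{(n)}, Tx) \leq \lambda\, P\bigl(x, y, p_1^{(n)}, \ldots, p_{k-2}^{(n)}\bigr),
\end{equation*}
and then pass to the limit $n\to\infty$. By Proposition~\ref{pro:2.1} the mapping $T$ is continuous, so $Tp_i^{(n)}\to Tx$, and the continuity of $d$ forces the left-hand side to converge to $d(Tx,Ty)+d(Ty,Tx)+0+\cdots+0=2d(Tx,Ty)$. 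On the right, $d(y,p_1^{(n)})\to d(x,y)$ while every edge $d(p_i^{(n)},p_{i+1}^{(n)})$ and $d(p_{k-2}^{(n)},x)$ tends to zero, so the perimeter tends to $2d(x,y)$. Dividing the limiting inequality by $2$ yields \eqref{eq:2.2}.

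The only delicate point is ensuring that at each stage the $k$ vertices remain pairwise distinct so that Definition~\ref{def:2.1} is applicable; the accumulation point hypothesis handles this cleanly, since every neighborhood of $x$ contains infinitely many points of $X$ from which to draw. The rest is a routine limit computation, entirely analogous to the degenerate $k=3$ case, where two edges of the triangle tend to $d(x,y)$ while the third collapses.
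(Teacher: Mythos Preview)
Your proof is correct and follows essentially the same approach as the paper's: both place the $k-2$ auxiliary vertices of the polygon near the accumulation point $x$, apply~\eqref{eq:2.1}, invoke the continuity of $T$ from Proposition~\ref{pro:2.1}, and pass to the limit so that the perimetric inequality collapses to $2d(Tx,Ty)\leq 2\lambda\,d(x,y)$. The only cosmetic difference is that the paper draws the auxiliary vertices as consecutive terms $x_n,x_{n+1},\ldots,x_{n+k-3}$ of a single sequence converging to $x$, whereas you select a fresh batch $p_1^{(n)},\ldots,p_{k-2}^{(n)}$ at each stage; both devices serve the same purpose and the argument is otherwise identical.
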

\begin{proof} Given any accumulation point $x\in X$, and any $y\in X$. If $x=y$, then \eqref{eq:2.2} holds trivially.\\
Assume that $x\neq y$, since $x$ is an accumulation point of $X$, there exists a sequence $x_n\rightarrow x$ such that $x_n\neq x, x_n\neq y$ and all $x_n$ are pairwise distinct. Hence, by \eqref{eq:2.1}, we have
\begin{equation}\label{eq:2.3}
\begin{split}
&P(Tx,Ty,Tx_n,Tx_{n+1},\ldots,Tx_{n+k-4},Tx_{n+k-3})\\
&\leq\lambda P(x,y,x_n,x_{n+1},\ldots,x_{n+k-4},x_{n+k-3}),
\end{split}
\end{equation}
for all $n\in\mathbb{N}$. Since $d(x_n,x)\rightarrow 0$ and metric  $d$ is continuous, we have $d(y,x_n)\rightarrow d(y,x)$. Also, by Proposition \ref{pro:2.2}, every perimetric contraction on $k$-polygons is continuous, we have $d(Ty,Tx_n)\rightarrow d(Tx,Tx)$. Letting $n\rightarrow\infty$ in \eqref{eq:2.3}, we have
\begin{eqnarray*}
2d(Tx,Ty)\leq2\lambda(d(x,y)+d(y,x)),
\end{eqnarray*}
which is equivalent to \eqref{eq:2.2}.
\end{proof}
The following proposition is a direct corollary of Lemma~\ref{lem:2.3}.
\begin{proposition}\label{cor:2.2} Let $(X,d)$ be a metric space with $|X|\geqslant 3$, let $T:X\rightarrow X$ be a perimetric contraction on $k$-polygons, $3\leqslant k\leq|X|$, $k\in\mathbb{N}$. If all points in $X$ are accumulation points, then $T$ is a Banach contraction mapping.
\end{proposition}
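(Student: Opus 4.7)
The plan is to apply Lemma~\ref{lem:2.3} pointwise, once for each point of $X$, and then observe that the resulting family of inequalities is exactly the Banach contraction condition.

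More concretely, fix an arbitrary pair $x,y\in X$. By hypothesis every point of $X$ is an accumulation point, so in particular $x$ is an accumulation point of $X$. Lemma~\ref{lem:2.3} then tells us that the Banach contraction inequality~\eqref{eq:2.2} holds with this choice of $x$ for \emph{every} $y\in X$, using the same contraction constant $\lambda\in[0,1)$ as in the perimetric contraction condition~\eqref{eq:2.1}. Since the pair $(x,y)$ was arbitrary, we obtain
\[
d(Tx,Ty)\leqslant \lambda\, d(x,y)\qquad\text{for all } x,y\in X,
\]
which is exactly the definition of a Banach contraction mapping on $(X,d)$.

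There is essentially no obstacle: the only thing to check is the order of quantifiers, namely that Lemma~\ref{lem:2.3} supplies the estimate for all $y\in X$ once $x$ is an accumulation point, so allowing $x$ itself to range over $X$ (which is legitimate precisely because every point of $X$ is accumulation) converts the one-sided conclusion of the lemma into the symmetric Banach condition. No additional computation, no case analysis, and no appeal to completeness or to the fixed point theorem is needed; the statement is a one-line consequence of Lemma~\ref{lem:2.3}, which is why it is stated as a corollary.
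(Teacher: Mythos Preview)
Your proposal is correct and matches the paper's approach exactly: the paper states that Proposition~\ref{cor:2.2} is a direct corollary of Lemma~\ref{lem:2.3}, and your argument simply spells out why---apply the lemma at every point $x$ (legitimate since each is an accumulation point) to obtain~\eqref{eq:2.2} for all pairs $x,y\in X$.
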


\section{Kannan-Type Perimetric Contraction on $k$-Polygons and Related Fixed Point Theorems}
\quad In this section we introduce the Kannan-type perimetric contraction on $k$-polygons and prove the a fixed point theorem for such mappings.
\begin{definition}\label{def:3.1} Let $(X,d)$ be a metric space with $|X|\geqslant 3$. A mapping $T:X\rightarrow X$ is said to be a Kannan-type perimetric contraction on $k$-polygons $(3\leqslant k\leq|X|, k\in\mathbb{N})$ in $X$ if there exits $\mu\in[0,\frac{2}{k})$ such that
\begin{eqnarray}\label{eq:3.1}
P(Tx_1,Tx_2,\ldots,Tx_{k-1},Tx_k)\leq\mu(d(x_1,Tx_1)+d(x_2,Tx_2)+\cdots+d(x_k,Tx_k))
\end{eqnarray}
for all pairwise distinct points $x_i\in X, i=1,2,\ldots,k, k\geqslant 3$.
\end{definition}
\begin{remark}\label{rm:3.1} If we choose $k=3$ in Definition \ref{def:3.1}, then the mapping $T$ coincides with the notion of generalized Kannan type mapping introduce by E. Petrov, and R.K. Bisht \cite{Petrov2}.
\end{remark}

\begin{proposition}\label{pro:3.1} Every Kannan-type mapping whose contraction coefficient $\gamma$ lies in $[0,\frac{1}{k})$ $(k\in\mathbb{N},k\geqslant 3)$ is a Kannan-type perimetric contraction on $k$-polygons.
\end{proposition}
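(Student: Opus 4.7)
The plan is to prove the inclusion by a direct summation argument applied to the $k$ consecutive edges of the polygon. Recall that a Kannan-type mapping with coefficient $\gamma$ satisfies
\begin{equation*}
d(Tx,Ty)\leq \gamma\bigl(d(x,Tx)+d(y,Ty)\bigr)
\end{equation*}
for all $x,y\in X$, and that under the hypothesis we have $\gamma\in[0,\tfrac{1}{k})$. The idea is to apply this estimate to each of the $k$ edges $\{Tx_i,Tx_{i+1}\}$ (with indices taken modulo $k$, so that the wrap-around edge $\{Tx_k,Tx_1\}$ is included) and then add the resulting inequalities.

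First, I would fix $k$ pairwise distinct points $x_1,\ldots,x_k\in X$ and write the perimeter $P(Tx_1,\ldots,Tx_k)$ according to the definition in \eqref{e00}, so that it decomposes as a sum of exactly $k$ terms of the form $d(Tx_i,Tx_{i+1})$. Applying the Kannan-type inequality to each such term yields
\begin{equation*}
d(Tx_i,Tx_{i+1})\leq \gamma\bigl(d(x_i,Tx_i)+d(x_{i+1},Tx_{i+1})\bigr),\qquad i=1,\ldots,k,
\end{equation*}
where the index $k+1$ is identified with $1$. Summing these $k$ inequalities, I would observe that each quantity $d(x_j,Tx_j)$ appears on the right-hand side exactly twice, once as the first summand (for $i=j$) and once as the second summand (for $i=j-1$).

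Carrying out this bookkeeping gives
\begin{equation*}
P(Tx_1,\ldots,Tx_k)\leq 2\gamma\bigl(d(x_1,Tx_1)+\cdots+d(x_k,Tx_k)\bigr),
\end{equation*}
so setting $\mu:=2\gamma$ we obtain the desired inequality \eqref{eq:3.1}. Finally, from $\gamma<\tfrac{1}{k}$ it follows that $\mu=2\gamma<\tfrac{2}{k}$, so $\mu$ belongs to the admissible range $[0,\tfrac{2}{k})$ required in Definition~\ref{def:3.1}, and the conclusion follows. No real obstacle arises here; the proof is essentially an edge-counting observation, and the factor $2$ on the right-hand side is precisely the reason why the Kannan coefficient had to be assumed in $[0,\tfrac{1}{k})$ rather than merely in $[0,\tfrac{1}{2})$.
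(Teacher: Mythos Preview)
Your proposal is correct and follows essentially the same approach as the paper: apply the Kannan inequality to each of the $k$ consecutive edges of the polygon, sum, and observe that every term $d(x_j,Tx_j)$ appears exactly twice to obtain $P(Tx_1,\ldots,Tx_k)\leq 2\gamma\sum_{j=1}^k d(x_j,Tx_j)$ with $\mu=2\gamma<\tfrac{2}{k}$. Your write-up is in fact a bit more explicit than the paper's in justifying the factor $2$ and the admissible range of $\mu$.
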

\begin{proof} Let $(X,d)$ be a metric space with $|X|\geqslant 3$, and $T:X\rightarrow X$ be a Kannan type mapping. For any pairwise distinct points $x_i\in X, i=1,2,\ldots,k$, we have
\begin{align*}
&d(Tx_1,Tx_2)\leq\gamma(d(x_1,Tx_1)+d(x_2,Tx_2)),\\
&d(Tx_2,Tx_3)\leq\gamma(d(x_2,Tx_2)+d(x_3,Tx_3)),\\
&\quad\vdots\\
&d(Tx_{k-1},Tx_k)\leq\gamma(d(x_{k-1},Tx_{k-1})+d(x_k,Tx_k)),\\
&d(Tx_1,Tx_k)\leq\gamma(d(x_1,Tx_1)+d(x_k,Tx_k)).
\end{align*}
Adding the left and right sides of the above inequalities, we have
\begin{align*}
&P(Tx_1,Tx_2,\ldots,Tx_{k-1},Tx_k)
\\
&\leq 2\gamma (d(x_1,Tx_1)+d(x_2,Tx_2)+\cdots
+d(x_{k-1},Tx_{k-1})+d(x_k,Tx_k)).
\end{align*}
Hence, the desired assertion is concluded.
\end{proof}

\begin{lemma}\label{lem:3.2} Let $(X,d)$ be a metric space with $|X|\geqslant 3$, and let $T:X\rightarrow X$ be a Kannan-type perimetric contraction on $k$-polygons $(3\leqslant k\leq|X|, k\in\mathbb{N})$. If $z$ is an accumulation point of $X$ and $T$ is continuous, then the inequality
%\begin{eqnarray}\label{eq:3.2}
%d(Tz,Ty)\leq\frac{1}{2}(d(z,Tz)+d(y,Ty))
%\end{eqnarray}
\begin{equation}\label{eq:3.11}
2d(Tz,Ty)\leq\mu((k-1)d(z,Tz)+d(y,Ty)).
\end{equation}
holds for all $y\in X$.
\end{lemma}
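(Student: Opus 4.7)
The plan is to imitate the structure of Lemma~\ref{lem:2.3}: exploit the accumulation point to produce an auxiliary sequence converging to $z$, apply the defining Kannan-type perimetric contraction inequality~\eqref{eq:3.1} to a carefully chosen $k$-tuple containing $z$, $y$, and points from that sequence, and then let the auxiliary points collapse onto $z$ by continuity.

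First I would dispose of the trivial case $y=z$, in which both sides of~\eqref{eq:3.11} vanish or reduce to $0\leq \mu k\, d(z,Tz)$. Assuming $y\neq z$, I would use that $z$ is an accumulation point of $X$ to extract a sequence $\{x_n\}\subset X$ with $x_n\to z$, with $x_n\neq z$, $x_n\neq y$, and with all $x_n$ pairwise distinct. Then for each $n\in\mathbb{N}$ the $k$ points $z,y,x_n,x_{n+1},\ldots,x_{n+k-3}$ are pairwise distinct, so~\eqref{eq:3.1} yields
\begin{equation*}
\begin{split}
P(Tz,Ty,Tx_n,\ldots,Tx_{n+k-3})
&\leq \mu\bigl(d(z,Tz)+d(y,Ty)\\
&\qquad +d(x_n,Tx_n)+\cdots+d(x_{n+k-3},Tx_{n+k-3})\bigr).
\end{split}
\end{equation*}

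The main work is to identify the limit of both sides as $n\to\infty$. Continuity of $T$ gives $Tx_{n+j}\to Tz$ for $j=0,1,\ldots,k-3$, and continuity of $d$ then forces every edge of the perimeter on the left that lies between two consecutive auxiliary points, as well as $d(Tx_{n+k-3},Tz)$, to tend to $0$, while $d(Ty,Tx_n)\to d(Ty,Tz)$. Since the perimeter has exactly $k$ edges, the only surviving contributions are $d(Tz,Ty)$ and $d(Ty,Tz)$, so the left-hand side tends to $2d(Tz,Ty)$. On the right, each of the $k-2$ terms $d(x_{n+j},Tx_{n+j})$ tends to $d(z,Tz)$, yielding the limit $\mu\bigl((k-1)d(z,Tz)+d(y,Ty)\bigr)$, which is exactly~\eqref{eq:3.11}.

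The main obstacle I anticipate is purely bookkeeping: making sure the combinatorics of ``$k-2$ auxiliary points'' is consistent with the number of edges whose length must vanish in the limit, and confirming that a sequence of pairwise distinct points disjoint from $\{y,z\}$ and converging to $z$ does exist (which is immediate because $z$ is an accumulation point, so any sufficiently small punctured ball about $z$ contains infinitely many points of $X$, and at most one of them could equal $y$). Once these combinatorial counts are checked, the limit passage is standard and the argument closes without further subtlety.
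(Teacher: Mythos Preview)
Your proposal is correct and follows essentially the same approach as the paper's proof: both choose a sequence of pairwise distinct points $x_n\to z$ avoiding $y$ and $z$, apply~\eqref{eq:3.1} to the $k$-tuple $z,y,x_n,\ldots,x_{n+k-3}$, and pass to the limit using the continuity of $T$ and of the metric. Your write-up is in fact slightly more explicit than the paper's in tracking which perimeter edges survive in the limit on the left-hand side.
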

\begin{proof}
Given any accumulation point $z\in X$, and any $y\in X$. If $z=y$, then \eqref{eq:3.11} holds trivially.\\
Assume that $z\neq y$, since $z$ is an accumulation point of $X$, there exists a sequence $x_n\rightarrow z$ such that $x_n\neq z, x_n\neq y$ and all $x_n$ are pairwise distinct. Hence, by \eqref{eq:3.1}, we have
\begin{align}\label{eq:3.3}
&P(Tz,Ty,Tx_n,Tx_{n+1},\ldots,Tx_{n+k-3})\notag\\
&\leq\mu(d(z,Tz)+d(y,Ty)+d(x_n,Tx_n)+\cdots+d(x_{n+k-3},Tx_{n+k-3})),
\end{align}
for all $n\in\mathbb{N}$. Since $d(x_n,z)\rightarrow 0$ and metric function $d$ is continuous, we have $d(y,x_n)\rightarrow d(y,z)$. Also, due to the continuity of $T$, we have $Tx_n\to Tz$, $d(Ty,Tx_n)\rightarrow d(Ty,Tz)$ and $d(x_n,Tx_n)\rightarrow d(z,Tz)$. Letting $n\rightarrow\infty$ in \eqref{eq:3.3}, we have \eqref{eq:3.11}.
\end{proof}

\begin{proposition}\label{pro:3.3} Let $(X,d)$ be a metric space with $|X|\geqslant 3$, and $T: X\rightarrow X$ be a continuous Kannan-type perimetric
contraction on $k$-polygons $(3\leqslant k\leq|X|, k\in\mathbb{N})$. Suppose that all points of $X$ are accumulation points. Then $T$ is a Kannan-type contraction mapping.
\end{proposition}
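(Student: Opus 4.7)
The plan is to exploit Lemma~\ref{lem:3.2} in a symmetric manner. The conclusion of that lemma, namely $2d(Tz,Ty)\leq \mu((k-1)d(z,Tz)+d(y,Ty))$, is asymmetric in $z$ and $y$: the accumulation point $z$ carries weight $k-1$ while $y$ carries weight $1$. Under the present hypothesis that \emph{every} point of $X$ is an accumulation point, this asymmetry can be broken by invoking the lemma twice, once with the role of accumulation point played by $x$ and once by $y$.

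Concretely, for arbitrary points $x,y\in X$ with $x\neq y$, I would first apply Lemma~\ref{lem:3.2} with $z=x$ to obtain
\begin{equation*}
2d(Tx,Ty)\leq \mu\bigl((k-1)d(x,Tx)+d(y,Ty)\bigr),
\end{equation*}
and then apply it again with $z=y$ (interchanging the roles of the two points) to obtain
\begin{equation*}
2d(Tx,Ty)=2d(Ty,Tx)\leq \mu\bigl(d(x,Tx)+(k-1)d(y,Ty)\bigr).
\end{equation*}
Adding these two inequalities and dividing by four yields
\begin{equation*}
d(Tx,Ty)\leq \frac{\mu k}{4}\bigl(d(x,Tx)+d(y,Ty)\bigr),
\end{equation*}
which is exactly the Kannan-type contraction inequality.

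The final step is to verify that the coefficient $\gamma:=\mu k/4$ lies in the admissible Kannan range $[0,\tfrac{1}{2})$. Since Definition~\ref{def:3.1} requires $\mu\in[0,\tfrac{2}{k})$, we immediately get $\gamma=\mu k/4<\tfrac{1}{2}$, as needed; the trivial case $x=y$ makes the inequality $0=d(Tx,Ty)\leq 2\gamma d(x,Tx)$ automatic. There is no genuine obstacle beyond this symmetric bookkeeping: the hypothesis that \emph{all} points are accumulation points is precisely what allows the two applications of Lemma~\ref{lem:3.2} to be combined, and the constraint $\mu<2/k$ built into Definition~\ref{def:3.1} is calibrated exactly so that the averaged coefficient lands in the Kannan range.
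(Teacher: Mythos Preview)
Your proof is correct and follows essentially the same approach as the paper's: both apply Lemma~\ref{lem:3.2} twice with the roles of the two points interchanged, add the resulting inequalities, and divide by four to obtain the Kannan coefficient $\lambda=\mu k/4<\tfrac{1}{2}$. Your treatment is in fact slightly more complete, since you explicitly note the trivial case $x=y$.
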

\begin{proof}
By Lemma~\ref{lem:3.2}, in addition to~\eqref{eq:3.11} we also get the inequality
\begin{eqnarray}\label{eq:1.15}
2d(Ty,Tz)\leq\mu((k-1)d(y,Ty)+d(z,Tz))
\end{eqnarray}
for all $y,z\in X$. Adding the left and right sides of inequalities~\eqref{eq:3.11} and ~\eqref{eq:1.15} together with $\mu\in[0,\frac{2}{k})$, we get
\begin{align*}
&d(Tz,Ty)\leq\frac{k\mu}{4}((dz,Tz)+d(y,Ty))\\
&= \lambda((dz,Tz)+d(y,Ty)),
\end{align*}
where $\lambda \in [0,\frac{1}{2})$, which completes the proof.
\end{proof}

\begin{proposition}\label{pro:3.4} Let $(X,d)$ be a metric space with $|X|\geqslant 3$, and $T: X\rightarrow X$ be a mapping
contracting perimeters of $k$-polygons $(3\leqslant k\leq|X|, k\in\mathbb{N})$ with $0\leq\lambda<\frac{1}{k+1}$. Then $T$ is a Kannan-type perimetric
contraction on $k$-polygons $(3\leqslant k\leq|X|, k\in\mathbb{N})$ with respect to the metric $d$.
\end{proposition}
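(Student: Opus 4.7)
The plan is to bound the perimeter $P(x_1,\ldots,x_k)$ from above in terms of the Kannan-type quantity $\sum_{i=1}^k d(x_i,Tx_i)$ plus $P(Tx_1,\ldots,Tx_k)$, and then feed that bound into the perimetric contraction hypothesis to isolate $P(Tx_1,\ldots,Tx_k)$.

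The key observation is a three-step triangle inequality applied to each edge of the polygon: for consecutive points $x_i,x_{i+1}$ (indices modulo $k$),
\begin{equation*}
d(x_i,x_{i+1}) \leq d(x_i,Tx_i) + d(Tx_i,Tx_{i+1}) + d(Tx_{i+1},x_{i+1}).
\end{equation*}
Summing over $i=1,\ldots,k$, the terms $d(x_i,Tx_i)$ and $d(Tx_{i+1},x_{i+1})$ each contribute the full sum $A:=\sum_{i=1}^k d(x_i,Tx_i)$ (the second one being just a cyclic re-indexing of the first), and the middle terms reassemble into $P(Tx_1,\ldots,Tx_k)$. This yields
\begin{equation*}
P(x_1,\ldots,x_k) \leq 2A + P(Tx_1,\ldots,Tx_k).
\end{equation*}

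Next I would apply the perimetric contraction assumption \eqref{eq:2.1} to obtain
\begin{equation*}
P(Tx_1,\ldots,Tx_k) \leq \lambda P(x_1,\ldots,x_k) \leq \lambda\bigl(2A + P(Tx_1,\ldots,Tx_k)\bigr),
\end{equation*}
and then solve for $P(Tx_1,\ldots,Tx_k)$, which is legitimate since $\lambda<1/(k+1)<1$. This gives
\begin{equation*}
P(Tx_1,\ldots,Tx_k) \leq \frac{2\lambda}{1-\lambda}\, A = \mu\sum_{i=1}^k d(x_i,Tx_i),
\end{equation*}
where $\mu:=2\lambda/(1-\lambda)$.

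The remaining step is to verify that $\mu$ lies in the admissible Kannan range $[0,2/k)$. A short algebraic check shows $2\lambda/(1-\lambda)<2/k$ is equivalent to $\lambda(k+1)<1$, i.e.\ exactly the hypothesis $\lambda<1/(k+1)$. I do not foresee any real obstacle here; the proof is essentially a routine combination of the triangle inequality with the hypothesis, and the sharpness of the constant $1/(k+1)$ is dictated precisely by matching the threshold $2/k$ that appears in Definition~\ref{def:3.1}.
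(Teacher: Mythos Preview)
Your proposal is correct and follows essentially the same route as the paper: apply the three-term triangle inequality $d(x_i,x_{i+1})\leq d(x_i,Tx_i)+d(Tx_i,Tx_{i+1})+d(Tx_{i+1},x_{i+1})$ to each edge, sum to bound $P(x_1,\ldots,x_k)$ by $2\sum_i d(x_i,Tx_i)+P(Tx_1,\ldots,Tx_k)$, combine with~\eqref{eq:2.1}, and solve for $P(Tx_1,\ldots,Tx_k)$ to obtain $\mu=2\lambda/(1-\lambda)<2/k$. The only cosmetic difference is that the paper applies~\eqref{eq:2.1} first and then the triangle inequality, whereas you do the reverse; the computation and the resulting constant are identical.
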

\begin{proof} %By repeatedly applying the triangle inequality to equation \eqref{eq:3.1} on the right side, we derive that
By~\eqref{eq:2.1} for all pairwise distinct points $x_i\in X, i=1,2,\ldots,k$ we have
\begin{align*}
&P(Tx_1,Tx_2,Tx_3,\ldots,Tx_{k-1},Tx_k)\\
&\leq\lambda P(x_1,x_2,x_3,\ldots,x_{k-1},x_k)\\
\end{align*}
Using the triangle inequalities $d(x_i,x_j)\leqslant d(x_i,Tx_i)+d(Tx_i,Tx_j)+d(Tx_j,x_j)$, we have
\begin{align*}
&\leq\lambda(2(d(x_1,Tx_1)+d(x_2,Tx_2)+\cdots+d(x_k,Tx_k))\\
&+d(Tx_1,Tx_2)+d(Tx_3,Tx_4)+\cdots+d(Tx_{k-1},Tx_k)+d(Tx_k,Tx_1))\\
&=2\lambda(d(x_1,Tx_1)+d(x_2,Tx_2)+\cdots+d(x_k,Tx_k))\\
&+\lambda P(Tx_1,Tx_2,Tx_3,\ldots,Tx_{k-1},Tx_k).
\end{align*}
Rearranging the above inequality yields
\begin{align*}
&P(Tx_1,Tx_2,Tx_3,\ldots,Tx_{k-1},Tx_k)\\
&\leq \frac{2\lambda}{1-\lambda}(d(x_1,Tx_1)+d(x_2,Tx_2)+\cdots+d(x_k,Tx_k)).
\end{align*}
Since $0\leq\lambda<\frac{1}{k+1}$, $\mu=\frac{2\lambda}{1-\lambda}\in[0,\frac{2}{k})$. Hence, $T$ is a Kannan-type perimetric contraction on $k$-polygons.
\end{proof}

In the next, we establish a condition for the existence of fixed point(s) for Kannan-type perimetric contraction on $k$-polygons.

\begin{theorem}\label{th:3.1} Suppose $(X,d)$ is a complete metric space with $|X|\geqslant 3$. Let $T: X\rightarrow X$ be a Kannan-type perimetric contraction on $k$-polygons $(3\leqslant k\leq|X|, k\in\mathbb{N})$ in $X$. $T$ has a fixed point in $X$ if it does not have periodic points of prime periods $i, i=2,3,\cdots,k-1$, and it can admit at most $k-1$ fixed points.
\end{theorem}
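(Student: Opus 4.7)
The plan is to adapt the strategy of Theorem~\ref{th:2.2}. Fix $x_0 \in X$ and set $x_n = Tx_{n-1}$; if some $x_n$ happens to be a fixed point we are done, so assume $x_n \neq Tx_n$ for every $n$. The hypothesis that $T$ possesses no periodic points of prime periods $2, 3, \ldots, k-1$ then implies, by exactly the same divisibility argument as in Theorem~\ref{th:2.2}, that every $k$ consecutive elements of the orbit are pairwise distinct, so inequality \eqref{eq:3.1} is applicable along the orbit.

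Write $a_n = d(x_n, x_{n+1})$ and $r_n = P(x_n, x_{n+1}, \ldots, x_{n+k-1})$. Applying \eqref{eq:3.1} to the block $x_n, x_{n+1}, \ldots, x_{n+k-1}$ gives
\[
r_{n+1} \leq \mu\bigl(a_n + a_{n+1} + \cdots + a_{n+k-1}\bigr).
\]
The key geometric observation is that in any $k$-gon each side is at most half the perimeter (iterated triangle inequality): the distances $a_n, a_{n+1}, \ldots, a_{n+k-2}$ are sides of the polygon on $x_n, \ldots, x_{n+k-1}$ and so are each bounded by $r_n/2$, while $a_{n+k-1}$ is a side of the adjacent polygon on $x_{n+1}, \ldots, x_{n+k}$ and so is bounded by $r_{n+1}/2$. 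Substituting these bounds on the right and rearranging yields
\[
r_{n+1} \leq \frac{\mu(k-1)}{2-\mu}\, r_n =: c\, r_n,
\]
and an elementary check shows $c < 1$ precisely when $\mu < 2/k$. This is the main obstacle of the proof: a direct estimate that does not absorb the ``extra'' distance $a_{n+k-1}$ produces a constant that can exceed $1$ near the endpoint $\mu = 2/k$, and it is exactly this neighbouring-polygon bound that makes the threshold of Definition~\ref{def:3.1} sharp. Iterating gives $r_n \leq c^n r_0$ and $a_n \leq r_n/2 \to 0$ geometrically, so $\{x_n\}$ is Cauchy and converges to some $w \in X$ by completeness.

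To show $Tw = w$, I would apply \eqref{eq:3.1} to $w, x_n, x_{n+1}, \ldots, x_{n+k-2}$, which are pairwise distinct for all sufficiently large $n$ when the orbit visits $w$ only finitely often; in the alternative case $x_{n_j} = w$ along a subsequence, one has $x_{n_j+1} = Tw$ and $x_{n_j+1} \to w$, so $Tw = w$ immediately. Discarding the nonnegative terms on the left of the resulting perimeter inequality yields
\[
d(Tw, x_{n+1}) \leq \mu\bigl(d(w,Tw) + a_n + a_{n+1} + \cdots + a_{n+k-2}\bigr),
\]
and combining this with $d(w,Tw) \leq d(w, x_{n+1}) + d(x_{n+1}, Tw)$ and letting $n \to \infty$ forces $(1-\mu)d(w,Tw) \leq 0$, so $Tw = w$. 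Finally, the bound of at most $k-1$ fixed points is obtained as in Theorem~\ref{th:2.2}: if there were $k$ distinct fixed points $w_1, \ldots, w_k$, applying \eqref{eq:3.1} directly to them gives $P(w_1, \ldots, w_k) \leq 0$, which contradicts positivity of the perimeter on pairwise distinct points.
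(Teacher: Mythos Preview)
Your argument is correct, and in the central contraction estimate it is actually cleaner than the paper's. The paper applies \eqref{eq:3.1} to the block $x_n,\ldots,x_{n+k-1}$, writes out the resulting perimeter inequality in terms of the consecutive distances $a_j=d(x_j,x_{j+1})$, and then uses the polygon inequality $a_{n+k-1}\leq a_{n+1}+\cdots+a_{n+k-2}+d(x_{n+k},x_{n+1})$ to isolate $a_{n+k-1}$, obtaining the max-type recursion
\[
a_{n+k-1}\;\leq\;\frac{(k-1)\mu}{2-\mu}\,\max\{a_n,\ldots,a_{n+k-2}\},
\]
which then has to be unwound in blocks of length $k-1$ to produce geometric decay. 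Your approach packages the same ingredients differently: you keep the perimeters $r_n$ as the primary quantity, use the elementary ``each side is at most half the perimeter'' bound to control $a_n,\ldots,a_{n+k-2}$ by $r_n/2$ and the overlapping term $a_{n+k-1}$ by $r_{n+1}/2$, and arrive directly at the linear recursion $r_{n+1}\leq c\,r_n$ with the same constant $c=\tfrac{(k-1)\mu}{2-\mu}$. This avoids the max-recurrence bookkeeping entirely, at no loss of sharpness in the threshold $\mu<2/k$. The remaining parts of your proof --- the treatment of $Tw=w$ via \eqref{eq:3.1} applied to $w,x_n,\ldots,x_{n+k-2}$, the handling of the case that the orbit meets $w$, and the ``at most $k-1$ fixed points'' conclusion --- match the paper's arguments essentially line for line.
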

\begin{proof} Let $T: X\rightarrow X$ be a perimetric contraction on $k$-polygons in $X$ that does not have periodic points of prime period $i, i=2,3,\cdots,k-1$.\\
For any chosen $x_0\in X$, define the sequence $\{x_n\}$ by $x_n=Tx_{n-1}, n\in\mathbb{N}$. If $x_n$ is a fixed point of $T$ for any $n\in\mathbb{N}\cup\{0\}$, then the proof is completed. Assume that $x_n\neq Tx_n$ for all $n\in\mathbb{N}\cup\{0\}$, we have $x_n\neq x_{n+1}, n=0,1,2,\cdots$. Since $T$ does not attain periodic points of prime periods $2,3,\cdots,k-1$, therefore, it follows from a simple computation that every $k$ consecutive elements of $\{x_n\}$ are pairwise distinct.\\
Now, for any $n\in\mathbb{N}\cup\{0\}$, we have
\begin{align*}
&d(Tx_n,Tx_{n+1})+d(Tx_{n+1},Tx_{n+2})+\cdots+d(Tx_{n+k-2},Tx_{n+k-1})+d(Tx_{n+k-1},Tx_n)\\
&\quad\leq\mu(d(x_n,Tx_n)+d(x_{n+1},Tx_{n+1})+\cdots+d(x_{n+k-2},Tx_{n+k-2})+d(x_{n+k-1},Tx_{n+k-1})).\\
&\Rightarrow d(x_{n+1},x_{n+2})+d(x_{n+2},x_{n+3})+\cdots+d(x_{n+k-1},x_{n+k})+d(x_{n+k},x_{n+1})\\
&\quad\leq\mu(d(x_n,x_{n+1})+d(x_{n+1},x_{n+2})+\cdots+d(x_{n+k-2},x_{n+k-1})+d(x_{n+k-1},x_{n+k})).\\
&\Rightarrow (1-\mu)d(x_{n+k-1},x_{n+k})\leq\mu(d(x_n,x_{n+1})+d(x_{n+1},x_{n+2})+\cdots+d(x_{n+k-2},x_{n+k-1}))\\
&\quad\quad-(d(x_{n+1},x_{n+2})+d(x_{n+2},x_{n+3})+\cdots+d(x_{n+k},x_{n+1})).\\
&\Rightarrow (1-\mu)d(x_{n+k-1},x_{n+k})\leq\mu(d(x_n,x_{n+1})+d(x_{n+1},x_{n+2})+\cdots+d(x_{n+k-2},x_{n+k-1}))\\
&\quad\quad-d(x_{n+k-1},x_{n+k}).\\
&\Rightarrow (2-\mu)d(x_{n+k-1},x_{n+k})\leq\mu(d(x_n,x_{n+1})+d(x_{n+1},x_{n+2})+\cdots+d(x_{n+k-2},x_{n+k-1})).\\
&\Rightarrow d(x_{n+k-1},x_{n+k})\leq\frac{\mu}{2-\mu}(d(x_n,x_{n+1})+d(x_{n+1},x_{n+2})+\cdots+d(x_{n+k-2},x_{n+k-1})).\\
&\Rightarrow d(x_{n+k-1},x_{n+k})\leq\frac{(k-2)\mu}{2-\mu}\max\{d(x_n,x_{n+1}),d(x_{n+1},x_{n+2}),\ldots,d(x_{n+k-2},x_{n+k-1})\}.
\end{align*}
Denote $\rho=\frac{(k-2)\mu}{2-\mu}$. Then $\rho\in[0,1)$ as $\mu\in[0,\frac{2}{k}), k\in\mathbb{N},k>3$.\\
Let $r_n=d(x_n,x_{n+1}),n\in\mathbb{N}\cup\{0\}$ and $R=\max\{r_1,r_2,\ldots,r_{k-1}\}$.
Then, from the last inequality above, for any $n\in\mathbb{N}\cup\{0\}$, we have
\begin{align*}
&d(x_{n+k-1},x_{n+k})\leq\rho\max\{d(x_n,x_{n+1}),d(x_{n+1},x_{n+2}),\ldots,d(x_{n+k-2},x_{n+k-1})\}\\
&\Rightarrow r_{n+k-1}\leq\rho\max\{r_n,r_{n+1},\ldots,r_{n+k-2}\}.
\end{align*}
Therefore, we have
\begin{eqnarray*}
r_1\leq R, r_2\leq R,\cdots,r_{k-1}\leq R,r_k\leq\rho R,r_{k+1}\leq \rho R,\ldots,r_{2k-2}\leq\rho R,r_{2k-1}\leq\rho^2 R,\cdots.
\end{eqnarray*}
Since $\rho<1$, so we have
\begin{align*}
&r_1\leq R, r_2\leq R,\cdots,r_{k-1}\leq R,r_k\leq\rho^{\frac{1}{k-1}}R,r_{k+1}\leq\rho^{\frac{2}{k-1}}R,\ldots,r_{2k-2}\leq\rho R,r_{2k-1}\leq\rho^{\frac{k}{k-1}} R,\cdots.\\
&\Rightarrow r_n\leq \rho^{\frac{n}{k-1}-1}R,\quad\text{for\ \ all}\quad n\in\mathbb{N}\quad\text{with}\quad n\geq k.
\end{align*}
For any $n\in\mathbb{N}\cup\{0\}$ and for any $m\in\mathbb{N}$, we have
\begin{align*}
d(x_n,x_{n+m})&\leq d(x_n,x_{n+1})+d(x_{n+1},x_{n+2})+\cdots+d(x_{n+m-1},dx_{n+m})\\
&\leq r_n+r_{n+1}+\cdots+r_{n+m-1}\\
&\leq R(\rho^{\frac{n}{k-1}-1}+\rho^{\frac{n+1}{k-1}-1}+\cdots+\rho^{\frac{n+m-1}{k-1}-1})\\
&\leq R\rho^{\frac{n}{k-1}-1}\frac{1-\rho^{\frac{m}{k-1}}}{1-\rho^{\frac{1}{k-1}}}.
\end{align*}
Hence, $d(x_n,x_{n+m})\rightarrow 0$ as $n\rightarrow \infty$ for any $m\in\mathbb{N}$. This implies that $\{x_n\}$ is a Cauchy sequence converging a point $w\in X$ due to the completeness of $X$.\\

Recall that any $k$ consecutive element of the sequence $\{x_n\}$ are pairwise distinct. Note also  that there is not fixed points in the sequence $\{x_n\}$ but the fact that $w$ is fixed is not established yet. Hence, if $w\neq x_i$ for all $i\in \{1,2,...\}$, then inequality~\eqref{eq:3.1} holds for the $k$ pairwise distinct points  $w$, $x_{n-1}$,  $x_n$,...,$x_{n+k-3}$.

Suppose that there exists the smallest possible $i\in \{1,2,...\}$ such that $w=x_i$.  If there exists $m>i$ such that $w=x_m$, then the sequence $\{x_n\}$ is cyclic starting from $i$ and can not be a Cauchy sequence. Hence, the points $w$, $x_{n-1}$,  $x_n$,...,$x_{n+k-3}$ are pairwise distinct at least when $n-1>i$.

Let us prove that $Tw=w$. If there exists $i\in \{1,2,...\}$ such that $x_i=w$, then suppose that $n-1>i$. By the triangle inequality and by inequality~\eqref{eq:3.1} we have
\begin{align*}
&d(w,Tw)\leq d(w,x_{n})+d(x_{n},Tw)=d(w,x_{n})+d(Tx_{n-1},Tw)\\
%&\leq d(w,x_n)+d(Tx_{n-1},Tw)+d(Tx_{n-1},Tx_{n})+\cdots+d(Tx_{n+1},Tx_n)+d(Tx_n,Tw)\\
&\leq d(w,x_n)+P(Tw,Tx_{n-1},Tx_{n},Tx_{n+1},\ldots,Tx_{n+k-4},Tx_{n+k-3})\\
&\leq d(w,x_n)+\mu(d(w,Tw)+d(x_{n-1},Tx_{n-1})+\cdots+d(x_{n+k-3},Tx_{n+k-3})).
\end{align*}
This implies that
\begin{eqnarray*}
(1-\mu)d(w,Tw)\leq d(w,x_n)+\mu(d(x_{n-1},Tx_{n-1})+\cdots+d(x_{n+k-3},Tx_{n+k-3})).
\end{eqnarray*}
Taking the limit in the above inequality as $n\rightarrow \infty$, we get $d(w,Tw)=0$ that is, $Tw=w$.

Assume that $T$ has at least $k$ distinct fixed points, say $w_i, i=1,2,\ldots,k$, that is, $Tw_i=w_i, i=1,2,\ldots,k$.\\
Then by~\eqref{eq:3.1} we have
\begin{align*}
&P(w_1,w_2,...,w_k)\\
&=P(Tw_1,Tw_2,...,Tw_k)\\
&\leq \mu(d(w_1,Tw_1)+d(w_2,Tw_2)+\cdots+d(w_{k-1},Tw_{k-1})+d(w_k,Tw_k))=0,
\end{align*}
which contradicts the fact that $w_i, i=1,2,\ldots,k$ are pairwise distinct. Thus, $T$ has at most $k-1$ fixed points.
\end{proof}

\begin{proposition}\label{pro:3.5} Suppose that under the assumption of Theorem \ref{th:3.1}, the mapping $T$ has a fixed point $w$ that acts as the limit for a specific iteration sequence $\{x_i\}_0^\infty$ defined by $x_i=Tx_{i-1}, i\in\mathbb{N}\cup\{0\}$ with $w\neq x_i$ for all $i\in\mathbb{N}\cup\{0\}$, then $w$ is the unique fixed point of $T$.
\end{proposition}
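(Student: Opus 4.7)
The plan is to mirror the strategy used in Proposition~\ref{pro:2.2}, but adapted to the Kannan-type right-hand side, which involves $d(x_i,Tx_i)$ rather than pairwise distances among the $x_i$. Assume for contradiction that $w^\ast \neq w$ is another fixed point of $T$. The first step is to verify that for all sufficiently large $n$, the $k$ points $w,\; w^\ast,\; x_n,\; x_{n+1},\; \ldots,\; x_{n+k-3}$ are pairwise distinct. Indeed, $w \neq w^\ast$ by assumption; $w \neq x_i$ for all $i$ by hypothesis; $x_n \to w \neq w^\ast$, so $x_{n+j}\neq w^\ast$ for all large $n$; and by the same periodic-point-avoidance argument used in the proof of Theorem~\ref{th:3.1}, any $k-2$ consecutive elements of $\{x_n\}$ are pairwise distinct.

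Next I would apply the Kannan-type perimetric contraction inequality~\eqref{eq:3.1} to this $k$-tuple:
\begin{equation*}
P(Tw,Tw^\ast,Tx_n,Tx_{n+1},\ldots,Tx_{n+k-3}) \leq \mu\bigl(d(w,Tw)+d(w^\ast,Tw^\ast)+\textstyle\sum_{j=0}^{k-3}d(x_{n+j},Tx_{n+j})\bigr).
\end{equation*}
Since $Tw=w$ and $Tw^\ast=w^\ast$, the first two terms on the right vanish, and $d(x_{n+j},Tx_{n+j})=d(x_{n+j},x_{n+j+1})=r_{n+j}$. From the estimate $r_n \leq \rho^{n/(k-1)-1}R$ established in the proof of Theorem~\ref{th:3.1}, each $r_{n+j}\to 0$, so the entire right-hand side tends to $0$ as $n\to\infty$.

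For the left-hand side, using $Tw=w$, $Tw^\ast=w^\ast$ and $Tx_{n+j}=x_{n+j+1}$, we have
\begin{equation*}
P(w,w^\ast,x_{n+1},x_{n+2},\ldots,x_{n+k-2}) = d(w,w^\ast)+d(w^\ast,x_{n+1})+\sum_{j=1}^{k-3}d(x_{n+j},x_{n+j+1})+d(x_{n+k-2},w).
\end{equation*}
As $n\to\infty$, $d(w^\ast,x_{n+1})\to d(w^\ast,w)$, $d(x_{n+k-2},w)\to 0$, and each intermediate $d(x_{n+j},x_{n+j+1})=r_{n+j}\to 0$, so the left-hand side tends to $2d(w,w^\ast)$. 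Passing to the limit yields $2d(w,w^\ast)\leq 0$, forcing $w=w^\ast$, a contradiction.

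The main obstacle is bookkeeping rather than conceptual: one must carefully confirm pairwise distinctness of the chosen $k$-tuple for large $n$ (combining the assumption $w\neq x_i$, the convergence $x_n\to w$, and the periodic-point exclusion), and one must justify that $r_n\to 0$ by quoting the geometric-type decay already obtained inside the proof of Theorem~\ref{th:3.1}. Both steps are available from the earlier results, so the argument is short once these ingredients are assembled.
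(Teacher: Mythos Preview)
Your argument is correct and follows essentially the same route as the paper's proof: apply inequality~\eqref{eq:3.1} to the $k$-tuple $w,w^\ast,x_n,\ldots,x_{n+k-3}$, use $Tw=w$, $Tw^\ast=w^\ast$ to kill two terms on the right, and let $n\to\infty$ so that the right-hand side tends to $0$ while the left-hand side tends to $2d(w,w^\ast)$. If anything you are more careful than the paper about verifying pairwise distinctness of the $k$-tuple; note also that one can argue directly that $w^\ast\neq x_n$ for \emph{all} $n$ (since $w^\ast=x_n$ would force $x_m=w^\ast$ for all $m\geq n$ and hence $w=w^\ast$), which avoids restricting to large $n$ for that part.
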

\begin{proof} Suppose that $z$ is another fixed point of $T$. Then $z\neq x_n$ for all $n\in\mathbb{N}\cup\{0\}$, otherwise, we have $w=z$. Therefore, $w,z,x_n$ are all distinct for all $n\in\mathbb{N}\cup\{0\}$.
Thus, for all $n\in\mathbb{N}\cup\{0\}$ we have
\begin{align*}
&d(Tw,Tz,Tx_n,...,Tx_{n+k-3})\\
&\leq\mu(d(w,Tw)+d(z,Tz)+d(x_n,Tx_n)+\cdots+d(x_{n+k-3},Tx_{n+k-3})),
\end{align*}
which implies that
\begin{align*}
d(w,z,x_{n+1},...,x_{n+k-2})
\leq\mu(d(x_n,x_{n+1})+\cdots+d(x_{n+k-3},x_{n+k-2})).
\end{align*}
Letting $n\rightarrow \infty$ in the above inequality, we have $2d(w,z)\leq 0$, which contradicts to the fact that $w\neq z$. Therefore, $T$ has a unique fixed point.
\end{proof}

\begin{example}
Let $(X,d)$ be a metric space such that $X=\{x_1,...,x_5\}$ and
$d(x_i,x_j)=1$, $i\neq j$, $1\leqslant i,j\leqslant 4$;
$d(x_i,x_5)=9$,  $1\leqslant i\leqslant 4$.
Define $T\colon X\to X$ as follows $Tx_1=x_2$, $Tx_2=x_3$, $Tx_3=x_4$, $Tx_4=x_4$, $Tx_5=x_1$. Further,
$$
P(Tx_1,Tx_2,Tx_3,Tx_4,Tx_5)=P(x_2,x_3,x_4,x_4,x_1)=1+1+0+1+1=4,
$$
$$
d(x_1,Tx_1)+d(x_2,Tx_2)+d(x_3,Tx_3)+d(x_4,Tx_4)+d(x_5,Tx_5)=
$$
$$
d(x_1,x_2)+d(x_2,x_3)+d(x_3,x_4)+d(x_4,x_4)+d(x_5,x_1)=
1+1+1+0+9=12.
$$
Thus, $T$ is a Kannan-type perimetric contraction on $5$-polygons with $\mu=\frac{4}{12}=\frac{1}{3}<\frac{2}{5}=\frac{2}{k}$, $k=5$, see Definition~\ref{def:3.1}, and with the single fixed point $x_4$.
\end{example}

\subsection*{Funding}

This research is partially supported by Key Research and Development Project of Hainan Province (Grant No. ZDYF2023GXJS007); Sanya City Science and Technology Innovation Special Project (Grant No. 2022KJCX22); Talent Program of University of Sanya (Grant No. USYRC19-04) and Key Special Project of University of Sanya (Grant No. USY22XK-04).

\end{document}